\theoremstyle{plain}
\newtheorem{theorem}{Theorem}[section]
\theoremstyle{definition}
\theoremstyle{remark}
\theoremstyle{plain}
\theoremstyle{plain}
\newtheorem{proposition}{Proposition}[section]
\theoremstyle{plain}
\newtheorem{corollary}{Corollary}[section]
\theoremstyle{plain}
\def\bt{\boldsymbol\theta}
\def\btk{\boldsymbol\theta}
\def\bxk{{\bf x}}
\def\tr{{\rm Tr}}
\def\R{\mathbb{R}}
\def\S{\mathbb{S}}
\def\Timk{\prod_{j=1}^k}
\theoremstyle{plain}
\newtheorem{lemma}{Lemma}[section]
\begin{document}

 \title[Inverse scattering and tomography on probability distributions]{Correlation imaging in inverse scattering is tomography on probability distributions}
 \author{Pedro Caro, Tapio Helin, Antti Kujanp\"a\"a and Matti Lassas}
   \begin{abstract} 
   Scattering from a non-smooth random field on the time domain is studied 
    for plane waves that propagate simultaneously through the potential in variable angles.
    We first derive sufficient conditions for stochastic moments of the field to be recovered from correlations between amplitude measurements of the leading singularities, detected in the exterior of a region where the potential is almost surely supported.
    The result is then applied to show that if two sufficiently regular random fields yield the same data, they have identical laws as function-valued random variables.
   \end{abstract}
 \maketitle

\section{Introduction}
Randomness is often an inherent part of any computational model for an applied inverse problem.
For instance, it can reflect the chaotic evolution of the system or the perspective that the unknown object of interest is rough and vastly complex.
Ultimately, the observational noise is most often probabilistic in nature.
If the statistics of the system can be described to a good approximation, 
it can be desirable to transform the problem paradigm by considering correlations or other statistical moments of the data distribution and how that information 
relates to the relevant system parameters.
This approach is often called \emph{correlation based imaging} in literature and it has been recently studied for a variety of inverse problems (see e.g. applications in seismic imaging \cite{garnier2016passive}).

In this paper we consider scattering of waves from a time-independent random potential $V$ supported on a fixed compact set in $\R^n$. 
We study an inverse problem 
of recovering the law of $V$ given certain correlation data in the exterior of the potential.
The wave propagation is governed by 
\begin{eqnarray}\label{eq:main_problem}
 \left(\Box  - V(x) \right) u  (x,t,\tilde\bt) & = &  0,\nonumber\\
u (x,t,\tilde\bt) &  = & \sum_{j=1}^N \delta \left(t- x \cdot \tilde\theta_j \right) + u_{\text{sc}}(x,t,\tilde\bt), \\
u_{\text{sc}} (x,t,\tilde\bt)  & = & 0, \quad \text{for } t \ll 0,\nonumber 
\end{eqnarray}
where $(x,t) \in  \R^{n+1}$, $\tilde\bt= \tilde\bt^N := (\tilde\theta_1,\dots,\tilde\theta_N)$, $\Box := \partial_t^2 - \Delta$ is the wave operator, and the potential 
\[
V : \R^n \times \Omega \rightarrow \R, \ V(x) = V(x, \omega)
\]
is a random generalized function, that is, a measurable map from the probability space $\Omega$ into a linear subspace of generalized functions which, in this paper, will be contained in the Sobolev space $H^2( \R^n ):= W^{2,2}( \R^n )$ endowed with the Borel $\sigma$-algebra. 
Above, the incoming wave is given by a superposition of $N$ plane waves.
We shall omit the parameter $\omega \in \Omega$ from notation and write $V(x)$ instead of $V(x,\omega)$. 

Given a family of directions $\theta_1, \dots, \theta_k \in \mathbb{S}^{n-1}$ which are not necessarily distinct from each other, let us denote the trajectory of the plane wave $\delta (t - \theta_j \cdot x )$, $j\in \{ 1,\dots,k \}$ by 
\begin{equation*}
\Sigma_j := \{ (x,t) \in \R^{n}\times \R : x \cdot \theta_j = t \} \text{. } 
\end{equation*}
In the following $\Sigma_j(t)$ stands for the $(n-1)$-dimensional hyperplane in $\Sigma_j$ given a fixed $t\in\R$. It is well-known that after suitable time $T>0$ the potential $V$ produces a discontinuity in the scattered field across the hyperplane $\Sigma_j(t)$ for $t>T$. More importantly, the discontinuities carry 
information regarding the integral values of $V$ over the bicharacteristic lines.
 By setting measurement devices outside the region where the potential is almost surely supported one captures ``the shadows'' of the potential in different angles from the outgoing wave fronts as they collide into the detectors. 
After repeating the observation one can consider empirical correlations between patterns captured by separate measurement devices. 

Motivated by this line of thought, we study correlations in the exterior of the potential. 
We assume that the observations approximate the following generalised function to a good approximation. We will refer to it as the exterior data:
\begin{equation}
	\label{eq:data}
(\bxk, \btk) \mapsto D^k(\bxk,\btk)=D_V^k(\bxk,\btk) \quad \textrm{and for all} \; (\bxk,\btk)  \in \Timk (\R^n \times \S^n),
\end{equation}
where $\bxk = (x_1,\ldots,x_k)$, $\btk = (\theta_1,\ldots,\theta_k) $ and
\[
D^k (\bxk,\btk) := 
 \mathbb{E} \left( \lim_{s\rightarrow \infty}   \prod_{j=1}^k [ u_{sc}   ]_{\Sigma_j}(x_j + s\theta_j ,  \tilde\bt) \right).
\]
The notation $[ u_{sc}  ]_{\Sigma_j}  (x, \tilde\bt)$ stands for the jump across $\Sigma_j$,
\[
[ u_{sc} ]_{\Sigma_j} (x, \tilde\bt)=( \tr_{\Sigma_j}^+  -  \tr_{\Sigma_j}^-) u_{sc} (x, x \cdot \theta_j, \tilde\bt ),
\]
where $\tilde\bt  \in \prod_{j=1}^{N} \mathbb{S}^{n-1}$, $N = \# \cup_{j=1}^k \{  \theta_j\}$ is a parametrisation of distinct elements in $ \theta_1, \dots,\theta_k $, that is, a bijection $j \mapsto \tilde\bt_j$ from $\{1,\dots,N\}$ into $\bigcup_{j=1}^k \{ \theta_j \}$. 
 The traces  
$
\tr_{\Sigma_j}^{\pm} 
$ 
 stand for restrictions to the boundary $\Sigma_j$ from the
upper and lower half-spaces 
\[
\{(x,t) \in \R^{n+1} : 0 \leq \pm (t - x\cdot \theta_j) , \ x\in \R^n \setminus \overline{B(0,R)} \}
\]
 and the radius $R$ is chosen large enough so that the support of the potential is almost surely contained in the ball $B(0,R)$.  
The angles $\tilde\theta_j$, $j=1,\dots, N $ of the incoming waves $\delta ( t- x\cdot \tilde\theta_j )$ are considered to be distinct from each other for technical reasons. 
The vectors $\theta_j$, $j=1,\dots, k $ are not required to be distinct. 
We give a precise definition of the the restrictions in the beginning of the next section.

The exterior data is invariant with respect to  
shifts along the trajectories:
\begin{equation}\label{shift}
D^k (\bxk, \btk ) = D^k (x_1 + s_1 \theta_1 , \dots,x_k + s_k \theta_k , \btk ), \ s_1, \dots, s_k \in \R
\end{equation} Therefore, without loss of information, it can alternatively be given on the tangent bundle $\prod_{j=1}^k T  \mathbb{S}^{n-1}$ by identifying each fiber $  T_{\theta} \mathbb{S}^{n-1}$, with the orthogonal complement  $ \{\theta\}^\perp \times \{ \theta \} \subset \R^n \times \{ \theta\} = T_\theta \R^n $, $\{ \theta \}^\perp := \{ x \in \R^n : x \cdot \theta = 0 \}$ (see Appendix).
The data is then expressed as a generalised function
\[
 \btk \mapsto D^k( \btk) \in \mathcal{D}' \left(T_{\btk} \prod_{j=1}^k   \mathbb{S}^{n-1}\right), \  \btk \in \prod_{j=1}^k \mathbb{S}^{n-1},
\]
given by
\[
D^k ( \bt ) (y_1, \dots,y_k ) =  D^k  \big( (y_1,0) , \dots, (y_k,0)     , \bt \big)  
\]
in coordinates $y_j = (y_j^1, \dots, y_j^{n-1}) \in  \R^{n-1}$ of $T_{\theta_j}  \S^{n-1}$, $j=1,\dots,k$. 
For a smooth potential one obtains more practical form, $D^k \in C^\infty (\prod_{j=1}^k T\mathbb{S}^{n-1})$ which is applied in Proposition \ref{pro12378}. 
In comparison with the typical far-field measurement 
the exterior data contains additional tangential parameters $(y_1, \dots,y_k )$.
The data models measurements of statistical correlation between amplitude peaks of leading singularities which scatter from randomly varying objects as a result of interaction between the associated potential and the incident waves.
The tangential parameters correspond to points in the surface of a detector plate or a film. 
The interpretation requires that during a single measurement the random state of the target varies slowly compared to the speed of propagation of waves (e.g. the speed of light or sound).


The model (\ref{eq:main_problem}) can also be interpreted in the frequency domain. 
Taking Fourier transform with respect to time converts the model into a time-harmonic system,  
\[
(\Delta + \lambda^2 + V(x)) \widehat{u} (x,\lambda,\tilde\bt) = 0,
\]
associated with angular dependency $\lambda^2$, an electric potential $V$,
the incident wave $\widehat{u}_I (x, \lambda, \tilde\bt)= \sum_{j=1}^N e^{-i \lambda x \cdot \tilde\theta_j}$ and the Sommerfeld radiation condition. In quantum mechanics such a model typically arises from the time-dependent Schr\"odinger equation,
\[
\big( V(x) - \Delta \big) \Psi (x,s, \tilde\bt)  = i\frac{2m}{\hbar}  \partial_s  \Psi (x,s, \tilde\bt).  
\]
More precisely, the stationary wave $\widehat{u} (x,\lambda,\tilde\bt)$ equals the spatial part $ \Psi (x,0, \tilde\bt, \lambda)$ of the solution $ \Psi (x,s, \tilde\bt, \lambda) =  \Psi (x,0, \tilde\bt, \lambda)e^{-i \lambda s / \hbar} $ with energy $E =
\lambda^2$ and the general solution is a superposition of these waves. 
Singularities of a scattered wave  in the time domain appear in the frequency domain as slower decay of the Fourier transform in directions that belong to the wave front set. The leading singularity of $u_{sc} (x,t,\tilde\bt)$ in time corresponds to the high frequency asymptote of $\widehat{u}_{sc} (x,\lambda,\tilde\bt) =   \Psi (x,0, \tilde\bt, \lambda) - \widehat{u}_I (x, \lambda, \tilde\bt) $, that is, the first term 
 of an asymptotic series expansion 
 with respect to the variable $\lambda$.
The exterior data for a potential in $ C_c^{\infty} (\R^{n})$ with the support almost surely contained in a ball $B(0,R)$ is the time domain counterpart of 
\[
\begin{split}
\mathbb{E} \left( c_{k,n}  \Timk   \lim_{\lambda \rightarrow \infty}   e^{-i \lambda   x_j  \cdot \theta_j   } \lambda    \Psi_{sc} (x_j,0, \tilde\bt, \lambda) \right) ,
 \ \ \bxk \in  \Timk \R^n \setminus \overline{B(0,R)}, \ \ \lambda \in \R \setminus \{0 \} 
\end{split}
\]
which can be obtained directly from the progressive wave expansion \cite{MR3224125}.  
The exterior data therefore describes statistical correlations in scattering patterns produced by multiple high-energy particles interacting with the random potential. 
Similar interpretation of data is expected to be valid for potentials with less regularity but will not be studied here.

\subsection{The Results}

 For a random field
 \[
 V : \mathbb{R}^n \times \Omega \rightarrow \R : (x,\omega) \mapsto V (x , \omega)
  \]
we introduce the following three conditions:
\begin{enumerate}
\item[(C1)] $V \in H^2( \R^n ) \cap L^\infty (\R^n)$ almost surely
\item[(C2)] There is a compact set $K\subset \R^n$ such that $V $ is supported in $K$ almost surely
\item[(C3)] There is a constant $a>0$ such that $\mathbb{E}e^{a\|V\|_{H^2} } < \infty$, 
   %
\end{enumerate}
The compact set $K$ in (C2) can be replaced by a ball. 
 A field with almost surely bounded $H^2$-norm, $ \|V \|_{H^2} \in  L^\infty (\Omega) $, satisfies (C3).

Our main results are given by the following theorems.
\begin{theorem}\label{thm:main}
Let $V : \mathbb{R}^n \times \Omega \rightarrow \R$ be a random field that satisfies the conditions \textup{(C1)} and \textup{(C2)} above. 
Then the $k^{\text{th}}$ moment map $M^k \in \mathcal{E}' \left(\Timk \R^{n}\right)$, given by
\begin{equation}
	\label{eq:moment_map}
 M^k (\bxk) = \mathbb{E} \left( \Timk V (x_j) \right),
\end{equation}
is uniquely determined by the exterior data \eqref{eq:data} for any $k\in \mathbb{N}$.
\end{theorem}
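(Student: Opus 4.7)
The plan is to identify the exterior data $D^k$, up to an explicit dimensional constant, with the $k$-fold parallel X-ray transform of the moment $M^k$, and then invert that transform on compactly supported distributions. I expect the main obstacle to lie in establishing this identification at the regularity $H^2\cap L^\infty$ of the potentials allowed by (C1), since the classical progressive wave machinery is built for smooth potentials.

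First I would analyse the jump $[u_{sc}]_{\Sigma_j}(x_j+s\theta_j,\tilde\bt)$ one realisation at a time and let $s\to\infty$. By linearity of the wave equation $u_{sc}=\sum_{i=1}^N u_{sc,i}$, where $u_{sc,i}$ is generated by the single plane wave $\delta(t-x\cdot\tilde\theta_i)$; distinctness of the $\tilde\theta_i$ confines the singular support of each $u_{sc,i}$ outside the scatterer to its own characteristic $\Sigma_i$, so only the index with $\tilde\theta_i=\theta_j$ contributes to the jump across $\Sigma_j$. The progressive wave asymptotic --- the same limit already recorded in the smooth setting by Proposition~\ref{pro12378} --- then yields
\[
\lim_{s\to\infty}[u_{sc}]_{\Sigma_j}(x_j+s\theta_j,\tilde\bt)=c_n\int_{\R}V(x_j+\tau\theta_j)\,d\tau,
\]
a dimensional multiple of the X-ray transform of $V$ along the ray through $x_j$ in direction $\theta_j$. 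To move from $C_c^\infty$ down to (C1), I would mollify $V_\varepsilon=V\ast\rho_\varepsilon$, apply Proposition~\ref{pro12378} pointwise in $\omega$, and pass to $\varepsilon\to 0$ using trace estimates for the scattered field that are uniform on bounded sets of $H^2\cap L^\infty$.

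Taking products over $j$ and expectation, and interchanging via Fubini (the realisation-wise bound $|\int V(x_j+\tau\theta_j)\,d\tau|\leq\mathrm{diam}(K)\|V\|_{L^\infty}$ together with the a priori existence of $M^k\in\mathcal{E}'$ as a distribution justifies the exchange), this gives
\[
D^k(\bxk,\btk)=c_n^k\int_{\R^k}M^k(x_1+s_1\theta_1,\ldots,x_k+s_k\theta_k)\,ds_1\cdots ds_k,
\]
identifying $D^k$ with the $k$-fold parallel X-ray transform of the compactly supported $M^k$. Inversion is then a standard tomography argument: taking a partial Fourier transform in each $x_j$ and applying the Fourier slice theorem coordinate-by-coordinate reduces the data at fixed $\btk$ to $\widehat{M^k}(\xi_1,\ldots,\xi_k)$ on the product of hyperplanes $\{\xi_j\perp\theta_j\}$; letting $\btk$ range over $\prod_j\S^{n-1}$ sweeps out $\prod_j\R^n$, and compact support of $M^k$ from (C2) makes $\widehat{M^k}$ real-analytic, so $M^k$ is pinned down uniquely. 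The delicate point, as noted, is the mollification step in the leading-singularity formula: one must verify continuous dependence of the scattered-wave trace on the potential in the $H^2\cap L^\infty$ topology.
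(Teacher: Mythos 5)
Your overall architecture matches the paper's: identify the jump of the leading singularity with the X-ray transform of $V$, deduce that $D^k$ is (up to the factor $2^{-k}$) the $k$-fold parallel X-ray transform of $M^k$ over the affine subspaces $\{(x_j+s_j\theta_j)_j\}$, and invert. Your inversion step is a genuinely different but perfectly sound route: you use the Fourier slice theorem slot-by-slot and sweep $\btk$ over $\Timk\S^{n-1}$, whereas the paper stacks the subspaces $L(\bxk,\btk)$ into hyperplanes $\Gamma(r,\boldsymbol\eta)\subset\R^{nk}$ and applies the Radon inversion formula on $\R^{nk}$; your version is more elementary and gives uniqueness just as well (the paper's version has the advantage of producing an explicit reconstruction operator $\Psi^k$, which it reuses in its regularisation scheme).

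The genuine gap is exactly where you flagged it, and your proposed repair is not the one that works. You want to mollify the potential realisation-wise, apply the smooth result, and pass to the limit ``using trace estimates for the scattered field that are uniform on bounded sets of $H^2\cap L^\infty$''. Such continuity of the map $V\mapsto \mathrm{Tr}^{\pm}_{\Sigma_j}u_{sc}$ in the potential is precisely the hard analytic content and is nowhere established; moreover $V_\varepsilon=V\ast\rho_\varepsilon\to V$ in $L^\infty$ can fail for $V\in H^2\cap L^\infty$ when $n\ge 4$, so even the convergence of the approximating potentials in the topology you invoke is problematic. The paper avoids this entirely: for a fixed rough realisation it writes $u_{sc}=\sum_{j}\tfrac12 a_1(x,\tilde\theta_j)H(t-x\cdot\tilde\theta_j)+u_R$ with $a_1(x,\theta)=\int_{-\infty}^0V(x+s\theta)\,ds$, shows that $H(t-x\cdot\theta)(\Delta+V)a_1\in L^2$ (using $V\in H^2\cap L^\infty$), and then proves by an energy estimate that the residual $u_R$ lies in $H^1_{loc}(\R^{n+1})$, whence its jump across $\Sigma_j$ vanishes and $[u_{sc}]_{\Sigma_j}=\tfrac12 a_1$ exactly, with no limiting argument in the potential. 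Mollification enters only afterwards and only at the level of the \emph{data}: since convolution in the transverse variables commutes exactly with the X-ray transform, $\phi^k_\epsilon\ast D^k=D^k_\epsilon$ is an algebraic identity, and the final limit $M^k_\epsilon\to M^k$ is just distributional convergence of mollifications. To close your argument you would either have to prove the uniform trace estimate you postulate, or replace that step by a direct decomposition of $u_{sc}$ for rough $V$ of the kind above.
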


In particular, the proof of Theorem \ref{thm:main} provides a reconstruction strategy to recover an explicitly defined sequence $M^k_\epsilon$, $\epsilon>0$ of smooth functions that converges to $M^k$. Moreover, as Gaussian random fields are determined by their mean field and covariance function, the Theorem \ref{thm:main} yields an immediate corollary for this 
important class of random models.

\begin{corollary}
If the potential $V$ in Theorem \ref{thm:main} is a Gaussian random field, then the probability distribution of $V$ is uniquely determined given the exterior data $$(\bxk,\btk) \mapsto D^j (\bxk,\btk) \quad \textrm{for} \; j=1,2, \textrm{and for all} \; (\bxk,\btk)  \in \Timk (\R^n \times \S^n).$$
\end{corollary}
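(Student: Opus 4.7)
The plan is to apply Theorem~\ref{thm:main} at orders $k=1$ and $k=2$, and then invoke the standard characterization of Gaussian measures by their first two moments.

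First, I would note that since $V$ satisfies (C1) and (C2) by assumption, Theorem~\ref{thm:main} applies for every $k\in\mathbb{N}$. Specialising to $k=1$, the exterior datum $D^1$ determines the mean field
\[
M^1(x) = \mathbb{E}\bigl(V(x)\bigr) \in \mathcal{E}'(\mathbb{R}^n),
\]
and specialising to $k=2$, the datum $D^2$ determines the second moment
\[
M^2(x_1,x_2) = \mathbb{E}\bigl(V(x_1)V(x_2)\bigr)\in\mathcal{E}'(\mathbb{R}^n\times\mathbb{R}^n).
\]
The covariance operator $C$ of $V$ is then determined by the identity
\[
C(x_1,x_2) = M^2(x_1,x_2) - M^1(x_1)\,M^1(x_2),
\]
as distributions, using that pointwise products on the right-hand side make sense after appropriate smoothing or in the distributional bilinear sense on $\mathcal{E}'(\mathbb{R}^n)\otimes \mathcal{E}'(\mathbb{R}^n)$.

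Second, I would recall that a Gaussian random field taking values in the separable Hilbert space $H^2(\mathbb{R}^n)$ (with its Borel $\sigma$-algebra, as stipulated in the setup) is uniquely determined as a probability measure by its mean element $M^1$ and its covariance operator $C$. Equivalently, the characteristic functional of such a Gaussian random variable is
\[
\varphi \mapsto \exp\!\Bigl( i\langle M^1,\varphi\rangle - \tfrac{1}{2}\langle C\varphi,\varphi\rangle \Bigr),
\]
which depends only on $M^1$ and $C$; hence two Gaussian fields on $H^2(\mathbb{R}^n)$ with matching first and second moments have identical laws.

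Combining the two steps: given the exterior data $D^1$ and $D^2$, Theorem~\ref{thm:main} recovers $M^1$ and $M^2$, from which $C$ is determined, and by the Gaussian characterization the law of $V$ is determined.

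The argument is essentially a direct corollary and I do not expect a serious obstacle. The only point to be careful about is the interpretation of $M^1(x_1)M^1(x_2)$ in the subtraction defining the covariance; since $M^1$ and $M^2$ are compactly supported distributions (elements of $\mathcal{E}'$), this is justified either by viewing $M^1\otimes M^1$ as an element of $\mathcal{E}'(\mathbb{R}^n\times\mathbb{R}^n)$ via the tensor product of distributions, or by working with the smoothed approximants $M^k_\epsilon$ furnished by the reconstruction strategy in Theorem~\ref{thm:main} and passing to the limit.
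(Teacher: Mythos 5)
Your argument is correct and is exactly the paper's intended (one-sentence) justification: Theorem \ref{thm:main} with $k=1,2$ recovers the mean and second moment, hence the covariance, and a Gaussian measure on the separable Hilbert space $H^2(\R^n)$ is determined by these. The paper treats this as immediate, so your more detailed write-up matches its approach.
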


Theorem \ref{thm:main} can be applied to derive sufficient conditions for the associated laws $V_*\mathbb{P} : A \mapsto \mathbb{P} \{ \omega \in \Omega :  V(\cdot, \omega) \in A\}$ in $H^2(\mathbb{R}^n)$ to be unique, i.e., the only positive Borel measure related to the data:
\begin{theorem}\label{mainmaintheorem}
Let $V$ and $W $ be two 
random fields that satisfy the conditions \textup{(C1), (C2), (C3)} and yield the same exterior data 
 for every $k \in \mathbb{N}$.
Then the potentials have the same laws (i.e. probability distributions):
\[
V_*\mathbb{P} = W_*\mathbb{P}.
\]
\end{theorem}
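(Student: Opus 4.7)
The plan is to combine Theorem \ref{thm:main} with the exponential integrability condition \textup{(C3)} and a characterization of Borel measures on the separable Hilbert space $H^2(\R^n)$ in terms of finite-dimensional distributions along continuous linear functionals. First, Theorem \ref{thm:main} applied to both $V$ and $W$ yields $M_V^k = M_W^k$ in $\mathcal{E}'(\Timk \R^n)$ for every $k$. Pairing these moment distributions with tensor products $\phi_1 \otimes \cdots \otimes \phi_k$ of test functions $\phi_j \in C_c^\infty(\R^n)$ and invoking Fubini (justified by \textup{(C1)}--\textup{(C2)}, which place $V,W$ in $L^1(\R^n)$ almost surely with compact support) will give
\begin{equation*}
\mathbb{E} \prod_{j=1}^k \langle V, \phi_j\rangle = \mathbb{E} \prod_{j=1}^k \langle W, \phi_j\rangle.
\end{equation*}
By density of $C_c^\infty(\R^n)$ in $H^{-2}(\R^n)$ and the continuity of each realization $V(\omega), W(\omega)$ as an element of $H^2(\R^n)$, I would extend this identity to all $\phi_j \in H^{-2}(\R^n)$.

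Next, for any fixed finite tuple $\phi_1,\ldots,\phi_k \in H^{-2}(\R^n)$, I would set $Y_V := (\langle V, \phi_1\rangle,\ldots,\langle V, \phi_k\rangle)$ and analogously $Y_W$. The elementary estimate $|\langle V, \phi_j\rangle| \leq \|\phi_j\|_{H^{-2}}\|V\|_{H^2}$ combined with \textup{(C3)} yields a constant $a>0$ (depending on the tuple) such that $\mathbb{E} e^{a|Y_V|} < \infty$, and analogously for $Y_W$. Consequently, the joint moment generating function of $Y_V$ is finite, and therefore real-analytic, in a full neighbourhood of $0\in\R^k$, so its mixed moments determine the law of $Y_V$ uniquely. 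Since all mixed moments of $Y_V$ and $Y_W$ coincide by the previous step, $Y_V$ and $Y_W$ share the same distribution.

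Finally, since $H^2(\R^n)$ is a separable Hilbert space, its Borel $\sigma$-algebra is generated by a countable family of continuous linear functionals (for instance, pairing with an orthonormal basis of $H^2$), and a standard $\pi$--$\lambda$ argument then shows that any Borel probability measure on $H^2(\R^n)$ is determined by its finite-dimensional distributions along such functionals. Combined with the preceding paragraph this will yield $V_*\mathbb{P} = W_*\mathbb{P}$. The main obstacle I anticipate lies in the second step: promoting the moment identity from smooth tensor product test functions to arbitrary tuples in $H^{-2}(\R^n)^k$ while justifying the interchange of expectation and duality pairing, and confirming that \textup{(C3)} delivers exponential integrability strong enough to make the multivariate moment problem determinate. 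Once this analytic step is secured, the concluding measure-theoretic transfer to $H^2(\R^n)$ is routine.
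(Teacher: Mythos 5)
Your proposal is correct and follows essentially the same route as the paper: reduce to finite-dimensional marginals along a countable separating family of linear functionals, use (C3) and the bound $|\langle V,\phi\rangle|\leq \|\phi\|_{H^{-2}}\|V\|_{H^2}$ to get exponential integrability and hence moment-determinacy of each marginal, then recover equality of the laws on the separable Hilbert space from the finite-dimensional distributions. The only differences are in which standard tool is invoked where: you replace the paper's citation for multivariate moment determinacy with the classical analytic-MGF (Cram\'er) argument, and you replace its Kolmogorov-extension-plus-Borel-algebra lemma with a direct $\pi$--$\lambda$ cylinder-set argument; both substitutions are valid.
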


The proof of Theorem \ref{mainmaintheorem} relies partly on a result from \cite{DVURECENSKIJ200255} considering determinateness for Euclidean multivariate moment problem. This is the main reason to consider fields of the form (C3). 

\subsection{Previous literature}

A natural path for acquiring the spatial correlation data in practise is averaging a large number of independent observations of the scattered field in time.
Similar problem setup often appears in wave and particle propagation in heterogeneous media. Typically, heterogeneous medium is modelled as a realization of random field with a priori known statistics. In literature, multi-scale analysis or homogenization is often utilized with the aim of capturing the effective properties of the propagation. Notice that our work does not assume any scale separation. We refer to the articles \cite{borcea2003, bal2010kinetic, ishimaru1978wave, fouque2007wave, dehoop09} for various perspectives on wave propagation (whether classical or quantum) in random media.

The origin of the randomness can also be a specific source in the considered system, see e.g. the early work \cite{devaney1979inverse} on inverse random source problems.
Since then correlation based imaging in random source problems have been considered widely 
in the framework of different PDE models by Li, Bao and others \cite{li2017stability, bao2017inverse, li2017inverse, Bao2016several, Bao2014Helmholtz, Li2011source}.
Other applications include telescope imaging \cite{helin2018atmospheric} and seismic imaging \cite{garnier2009passive, garnier2016passive,MR2482156, MR3318382, helin2016correlation}. 
Imaging in random media has also been studied by Borcea and others \cite{MR3606418, MR2249471, MR1943391, MR3470117}, and for backscattering by Shevtsov \cite{MR1708341}.

Our paper provides continuation to previous work by the authors in \cite{LPS, HLP, caro2016inverse}, where the averaging procedure to estimate correlations 
is based on a single realization of the observed data, i.e.,
the random potential or boundary condition is sampled only once. Such an approach can reveal valuable information of the leading order statistics of the unknown field. However,
the full probability distribution of the unknown is not recovered unlike here.

The literature on inverse scattering for deterministic potentials is rather wide and we cite here only a few works in the field. In \cite{zbMATH00939857}, Colton and Kirsch introduced the linear sampling method to determine the support of an imperfect conductor given the far-filed of the scattered wave. Uniqueness for the inverse acoustic medium problem was proved by Nachman \cite{zbMATH04105476}, Novikov \cite{zbMATH04129351}, and Ramm \cite{zbMATH04028038}.
Uniqueness for the inverse backscattering problem in a generic class of potentials was proved by Eskin and Ralston \cite{MR1012864, MR1110451}. 
Uniqueness for angularly controlled potentials has been proved by Rakesh and Uhlmann  
\cite{MR3224125}. Single measurement inverse problems for the wave equation is explored by Rakesh \cite{MR2384771} and by Liu and others \cite{MR3325344, MR2582602}. 
Use of moments in inverse problems for partial differential equations has previously been studied by Kurylev and others in \cite{MR1474373, MR1991787,  MR1491678}.

Finally, we want to point out that there exists a variety of criteria for the moment problem to be determinate. These might provide potential alternatives for applications, where the exponential moment is not bounded. This aspect is not the focus of our work and we refer to \cite{schmudgen2017moment} and references therein regarding such generalizations.



 \section{Proofs of the Results}\label{324762384234239048}

\subsection{Preliminary Definitions}
Let us introduce some relevant notations and definitions. 
Given a distribution  $v\in \mathcal{D}'(X)$ on a smooth manifold $X$ let $WF (v)$ be the wave front set of $v$, i.e., the complement of the collection of co-vectors $(z_0,\xi_0) \in X \times( \R^{\dim (X)} \setminus \{0\} ) $ such that  in some neighbourhoods $U\ni z_0$ and $V\ni \xi_0$ the decay estimate 
\[
 \widehat{\varphi v} (\tau \xi) =O (\tau^{-m}), \text{ for } \tau \rightarrow \infty,\text{ uniformly in }\xi \in V,
\]
holds for every $\varphi \in C_c^\infty(U)$ and $m\in \mathbb{N}$. 
Let $\Gamma$ be a closed cone in $T^* (X)$ and define $\mathcal{D}_\Gamma' (X)$ as the collection of distributions $v\in \mathcal{D}'(X )$ such that $WF(v) \subset \Gamma$. Similarly, we define 
\begin{equation*}
	H^s_{\Gamma, loc}(X):= H^s_{loc}(X) \cap  \mathcal{D}_\Gamma'(X )
\end{equation*}
for $s\in \R$.
Given a submanifold $Y \subset X$ we denote by $N^* Y $ the conormal bundle of $Y$, that is, the collection of vectors $\xi \in T^*X$ such that $\langle \xi ,v \rangle = 0$ for every $v \in T_{\pi(\xi)} Y$, where $\pi : T^*X \rightarrow X$ stands for the bundle projection. 
Given a distribution $v$ on $\R^{n+1}$ with $WF (v) \cap N^* \Sigma_i = \emptyset$, the trace ${\rm Tr}_{\Sigma_i} (v) \in \mathcal{D}'(\Sigma_i) $ is well defined and depends continuously on $v$ with respect to the topology of distributions (see \cite{duistermaat2010fourier}). 

It is shown in the next section that $u_{sc}$ can be split into two parts $u_{sc} = \widetilde{u}_{sc} + u_R$ where 
\[
\widetilde{u}_{sc} ( \cdot, \cdot, \tilde\bt ) \in H^{-1}_{\Gamma , loc} \big( (\R^{n} \setminus \overline{B(0,R)}) \times \R \big),  
\]
with
\begin{equation}\label{gammaequa}
\Gamma := \bigcup_{j=1}^N N^*\Sigma_j \cup   \{ (x,t \ ; \  \xi, k ) \in T^* \R^{n+1} : (x,t)\in \R^{n+1} ,  \ \langle \xi, \tilde\theta_j  \rangle = 0, \ k=0  \},
\end{equation}
and $u_R \in H^1_{loc} (\R^{n+1})$ for compactly supported $V \in H^2(\R^n) \cap L^\infty (\R^n)$. 
Given $\theta_i \in \S^{n-1}$, we are interested in discontinuity across $\Sigma_i$ which appears in the first term of the decomposition. 
Amplitude of the peak is defined by the difference
\begin{equation}\label{def_peak}
\begin{split}
[ u_{sc} ]_{\Sigma_{i}} (x, \tilde\bt) :=&  (\tr^+_{\Sigma_i} - \tr^-_{\Sigma_i})\widetilde{u}_{sc}(x,x\cdot \theta_i, \tilde\bt)
\end{split}
\end{equation}
between the two limits $\tr^\pm_{\Sigma_i} \widetilde{u}_{sc} :=   \lim_{\epsilon \rightarrow 0 \pm} \tr_{\Sigma_i} \circ  S^*_\epsilon$, 
where 
\[
S^*_\epsilon : H^{-1}_{\Gamma, loc} \big( (\R^{n} \setminus \overline{B(0,R)}) \times \R \big) \rightarrow H^{-1}_{\Gamma_\epsilon, loc} \big( (\R^{n} \setminus \overline{B(0,R)}) \times \R \big), 
\]
\[
\Gamma_\epsilon := \{ (x, t - \epsilon; v ) \in T^* \R^{n+1} : (x,t; v) \in \Gamma \}
\]
 is the pull-back generated by $S_\epsilon : (x,t) \mapsto (x, t + \epsilon)$. 
As shown in Section \ref{mainprooffi}, the limits exist for compactly supported potentials in $H^2(\R^n) \cap L^\infty (\R^n)$. 
It is a consequence of the trace theorem that the amplitude $[u_{sc}]$ does not depend on the choice of the decomposition
\[
u_{sc} = \widetilde{u}_{sc} + u_R \in H^{-1}_{\Gamma, loc} \big( (\R^{n} \setminus \overline{B(0,R)}) \times \R \big) + H^{1}_{loc} (\R^{n+1}).
\]
In particular, the formal notation $(\tr^+_{\Sigma_i} - \tr^-_{\Sigma_i}) u_{sc}(x,x\cdot \theta_i, \tilde\bt)$ in the introduction makes sense.



\subsection{Unique recovery for smooth potentials}\label{proofpro}

Here we prove that arbitrary moments of the random potential are uniquely recovered by the data if we know a priori that the potential is smooth.
This partial result is the basis for the full proof of Theorem \ref{thm:main} in Section \ref{mainprooffi}.

\begin{proposition}\label{pro12378}
Let $V : \mathbb{R}^n \times \Omega \rightarrow \R$ be a random field that such that the condition \textup{(C2)} holds and $V \in C^\infty(\R^n)$ almost surely.
Then $M^k \in \mathcal{E}' \left(\Timk \R^{n}\right)$, given by \eqref{eq:moment_map}, 
is uniquely determined by the exterior data \eqref{eq:data} for any $k\in\mathbb{N}$.
\end{proposition}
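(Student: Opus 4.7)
The strategy is to reduce the statement to inversion of an iterated X-ray transform applied to a compactly supported smooth function.

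First, by linearity of the wave equation in the incoming field, decompose $u_{sc} = \sum_{i=1}^N v_i$, where $v_i$ is the scattered field produced by the single incoming plane wave $\delta(t - x\cdot\tilde\theta_i)$. For smooth $V$, propagation of singularities confines $WF(v_i)$ to $N^*\Sigma_i$ outside the support of $V$, so $v_i$ is smooth across $\Sigma_j$ whenever $\tilde\theta_i \ne \theta_j$. Since the directions $\tilde\theta_1,\ldots,\tilde\theta_N$ are pairwise distinct and each $\theta_j$ equals a unique $\tilde\theta_{i(j)}$, only that summand contributes to the jump:
\[
[u_{sc}]_{\Sigma_j}(x,\tilde\bt) = [v_{i(j)}]_{\Sigma_j}(x).
\]

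Second, I would invoke the progressive wave expansion (as in Rakesh--Uhlmann \cite{MR3224125}) to obtain the explicit identity
\[
\lim_{s\to\infty} [v_{i(j)}]_{\Sigma_j}(x_j + s\theta_j) = c_n \int_{\R} V(x_j + t\theta_j)\, dt = c_n\cdot XV(x_j,\theta_j),
\]
for $x_j$ outside $\operatorname{supp} V$, with a universal nonzero constant $c_n$. In other words, the asymptotic jump is (up to a known factor) the X-ray transform of $V$ along the line through $x_j$ in direction $\theta_j$.

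Third, the a.s.\ compact support of $V$ together with smoothness provides a deterministic bound $|XV(x_j,\theta_j)| \le \|V\|_\infty \cdot \operatorname{diam}(K)$, so dominated convergence allows the limit and expectation to be interchanged with the product, and Fubini then gives
\[
D^k(\bxk,\btk) = c_n^k \int_{\R^k} M^k(x_1 + t_1\theta_1, \ldots, x_k + t_k\theta_k)\, dt_1\cdots dt_k.
\]
Thus the data equals the slot-wise iterated X-ray transform of the compactly supported smooth function $M^k \in C_c^\infty(\R^{nk})$. I then recover $M^k$ by inverting one slot at a time: fixing $(x_2,\theta_2,\ldots,x_k,\theta_k)$, the right-hand side viewed as a function of $(x_1,\theta_1)$ is the classical X-ray transform of a compactly supported smooth function of $x_1$, hence invertible by the standard Radon/X-ray inversion. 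Iterating this slot by slot determines $M^k$ uniquely.

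The main obstacle is Step 2: deriving the clean asymptotic formula $\lim_{s\to\infty}[v_{i(j)}]_{\Sigma_j}(x_j + s\theta_j) = c_n\, XV(x_j,\theta_j)$. This requires careful bookkeeping in the progressive wave expansion, pinning down the correct constant and verifying that the subleading singularities propagated along the bicharacteristic over $\Sigma_j$ vanish in the large-$s$ limit outside $\operatorname{supp} V$. The remaining ingredients (wave-front propagation for Step 1, dominated convergence for Step 3, and iterated X-ray inversion for compactly supported smooth functions in Step 4) are comparatively routine given (C2) and smoothness.
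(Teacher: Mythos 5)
Your proposal is correct in substance and follows the paper's route for the first half: decompose $u_{sc}$ by linearity into single-plane-wave contributions, use the progressive wave expansion with its transport equations to identify the jump across $\Sigma_j$ as $\tfrac12 a_1(x,\theta_j)$ with $a_1(x,\theta)=\int_{-\infty}^0 V(x+s\theta)\,ds$ (so your constant is $c_n=\tfrac12$, independent of $n$), and pass to the limit $s\to\infty$ to turn the half-line integral into the full X-ray transform; note that the subleading terms $(t-x\cdot\theta)_+^{\alpha}$ are continuous, so they contribute nothing to the jump and no large-$s$ limit is needed to kill them --- the limit is only needed to complete the line integral. Where you genuinely diverge is the inversion step. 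The paper does \emph{not} invert slot by slot: it fixes a hyperplane $\Gamma(r,\boldsymbol\eta)\subset\R^{nk}$, chooses directions $\theta_j(\boldsymbol\eta)\perp\eta_j$, foliates $\Gamma(r,\boldsymbol\eta)$ by the $k$-dimensional product-of-lines subspaces $L(\bxk,\btk)$, integrates the data over the transversal $P(\boldsymbol\eta,\btk)$ to assemble the full $(nk-1)$-dimensional Radon transform $R[M^k](r,\boldsymbol\eta)$, and then applies the Radon inversion formula once in $\R^{nk}$; this packages the reconstruction into a single explicit operator $\Psi^k$ that is reused verbatim in the proof of Theorem~\ref{thm:main}. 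Your iterated one-slot X-ray inversion is equally valid (each partial integral $z_1\mapsto\int_{\R^{k-1}}M^k(z_1,x_2+t_2\theta_2,\dots)\,dt_2\cdots dt_k$ is compactly supported, so the classical X-ray transform is injective on it for $n\ge 2$, and the recursion closes) and is arguably more elementary for pure uniqueness; what it costs you is the closed-form reconstruction operator that the paper needs downstream. Two minor points to tighten: justify that only the summand with $\tilde\theta_{i(j)}=\theta_j$ contributes to the jump (outside $B(0,R)$ the other Heaviside fronts are transverse to $\Sigma_j$ and continuous across it at the relevant points), and be aware that your claim $M^k\in C_c^\infty$ requires differentiating under the expectation --- though the paper makes the same tacit integrability assumptions when it applies Fubini, and for uniqueness alone injectivity of the X-ray transform on $\mathcal{E}'$ suffices.
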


\begin{proof}
By linearity and uniqueness of the solution the scattered wave is of the form 
\[
u_{sc} (x,t,\tilde\bt) = \sum_{j=1}^N v_{sc} (x,t,\tilde\theta_j),
\]
where $v_{sc} (x,t,\theta)$ is the scattered part of $v (x,t,\theta)$, defined by
\begin{eqnarray*}
(\Box -V(x)) v (x,t,\theta)  & = & 0, \\
v (x,t,\theta) & = & \delta ( t -x \cdot \theta ) + v_{sc} (x,t,\theta), \\
v_{sc} (x,t,\theta) & = &  0, \quad {\rm for} \; t = -\text{diam} (K) ,
\end{eqnarray*}
for $(x,t, \theta) \in \R^{n+1}\times \mathbb{S}^{n-1}$.
Let $a_\alpha (x,\theta) \in  C^\infty( \R^n \times \mathbb{S}^{n-1})$, $\alpha \in \mathbb{N} \setminus \{0\}$, be the coefficients of the following asymptotic expansion:
\begin{equation}\label{1245637890hfdsfhjdns}
v_{sc} (x,t, \theta) = \frac{1}{2} a_1 ( x, \theta) H ( t - x \cdot \theta ) + \frac{1}{2} \sum_{\alpha=1}^s  a_{\alpha+1} (x,\theta) ( t - x \cdot \theta  )^\alpha_+  \mod C^{s+2}, 
\end{equation}
where the identity is up to functions in $C^{s+2}(\R^{n+1})$ and $H$ is the Heaviside step function,
\[
H(x) = \begin{cases} 1 ,& \text{for } x\geq 0 \\ 0, & \text{for } x <0. \end{cases}.
\]   
We recall from \cite{MR3224125, petkov1989scattering, 10.2307/2374463} 
that for a smooth compactly supported potential the expansion exists and is given recursively by
\[
 \alpha \theta \cdot \nabla a_{\alpha+1} (x,\theta) = \frac{1}{2}  \big( \Delta + V(x) \big) a_\alpha(x,\theta), \ \alpha=1,2,3,\dots
\] 
and
\[
\theta \cdot \nabla a_{1} ( x, \theta) =  V(x) .
\]
Due to the zero initial value of the scattered wave, the first coefficient 
equals the line integral: 
%
\[
a_1 (x,\theta) = \int_{-\infty}^0 V(x+s \theta)ds.
\] 
Therefore, we have the identity
\[
\lim_{s \rightarrow \infty} a_1 (x + s\eta ,\theta)  = \begin{cases} 0, & \eta \neq \theta \\ \int_{\R}  V(x+s \theta)ds, & \eta = \theta .\end{cases}, 
\]
for $\eta \in \mathbb{S}^{n-1}$.
A restriction of $v_{sc} (x,t, \theta)$ to the boundary $t = x \cdot \theta $ at any distant point along the ray $x + s\theta$, $s \in \R$ is simply the associated ray transform of the potential. 
Applying  (\ref{1245637890hfdsfhjdns}) implies
\[
\begin{split}
[ u_{sc} ]_{\Sigma_{i}} (x + s\theta_i , \tilde\btk) 
&=  \frac{1}{2}a_1 (x+ s \theta_i  , \theta_i ) 
\end{split}
\]
Therefore, the data at points $\bxk \in \Timk \R^{n}$ in directions $\btk \in \Timk \mathbb{S}^{n-1}$ satisfies
\begin{equation}\label{54206937632874628374}
\begin{split}
D^k( \bxk , \btk)  
&=   \mathbb{E} \Big( \lim_{s\rightarrow \infty} \prod_{j=1}^k [u_{sc}]_{\Sigma_1} ( x_j + s \theta_j , \tilde\bt) \Big) \\
&= \frac{1}{2^k} \mathbb{E} \Big(\prod_{j=1}^k \int\limits_{\R}  V ( x_j + s_j \theta_j ) ds_j   \Big) \\
&=  \frac{1}{2^k}\int\limits_{\R^k } \mathbb{E} \Big(  \prod_{j=1}^k V ( x_j + s_j \theta_j )    \Big) ds_1 \dots ds_k \\
&= \frac{1}{2^k} \int\limits_{L (\bxk,\btk)} \mathbb{E} \Big(\prod_{j=1}^k V ( z_j)   \Big) dl(z_1, \dots ,z_k), \\
\end{split}
\end{equation}
where 
$L(\bxk,\btk)$ is the affine subspace 
\begin{equation}
	\label{eq:defL}
L(\bxk, \btk) := \{  \bxk +\boldsymbol{h} \in \prod_{j=1}^k \R^n : \boldsymbol{h}\in  H_{\btk} \},
\end{equation}
where
\begin{equation*}
H_{\btk} := \{ (s_1\theta_1,\dots,s_k \theta_k) \in \Timk\R^n  : s_1,\dots,s_k \in \R \},
\end{equation*}
and $dl(z_1,\dots,z_k)$ denotes the pull-back volume form which is induced from the canonical volume form via the inclusion map $L(\bxk,\btk) \hookrightarrow \Timk\R^n$. 

The main idea of the next step is as follows:
Based on the identity (\ref{54206937632874628374}) and the fact that every $(nk-1)$-dimensional shifted hyperplane in $\Timk\R^n$ is obtained by stacking up spaces $L(\bxk, \btk)$ with different parameters $\bxk$, $\btk$ we can reconstruct the Radon transform of the $k^{\text{th}}$ moment map, $M^k(\bxk) := \mathbb{E} \Big( \prod_{j=1}^k V (x_j)\Big)$ from the data. 
That is to say, for arbitrary $(r, \boldsymbol\eta)  \in\R \times \mathbb{S}^{nk-1}$ 
one divides the hyperplane
\begin{equation}
	\label{eq:hyperplane_gamma}
	\Gamma ( r,\boldsymbol{\eta}) := \left\{ \boldsymbol{z} \in\Timk\R^n :  \boldsymbol{z}\cdot \boldsymbol{\eta} = r \right\}
\end{equation}
into distinct subspaces of the form $L(\bxk, \btk)$ given in equation \eqref{eq:defL}, then applies the data together with (\ref{54206937632874628374}) to obtain integrals $\int_L M^k  dl$ of the moment map over each of the subspaces and finally computes the superposition of them.

Let us formulate this idea rigorously. First, we provide a representation of the hyperplane $\Gamma(r,\boldsymbol\eta)$
as an orthogonal decomposition involving the subspace $L$ in \eqref{eq:defL}. 
Construct arbitrary smooth functions  
\[
\theta_j  : \mathbb{S}^{nk-1}  \rightarrow  \mathbb{S}^{n-1} 
\]
to satisfy $$\eta_j \cdot \theta_j ( \boldsymbol\eta) = 0$$
for any $j=1,...,k$.
Next, let $\btk := ( \theta_1 , \dots, \theta_k)$ and define the set $$P ( \boldsymbol{\eta} , \btk) \subset \Timk  \R^{n}$$ to be the maximal linear subspace orthogonal to $\boldsymbol{\eta}$ and vectors
\[
(0,\dots, 0, \underbrace{\theta_j ( \boldsymbol\eta )}_{j^{\text{th}} \text{ slot}},0,\dots,0) \in
\Timk\R^n, \ j=1,\dots,k,
\]
simultaneously.
Clearly, each $\bxk \in \Gamma ( r,\boldsymbol{\eta})$  is uniquely written in form
$
\bxk =  
\bxk^L
 + \bxk^P,
$
 where 
\[
\bxk^L =(x^L_1  , \dots, x^L_k )  \in L( r\boldsymbol{\eta},\btk):= L(r\eta_1,\dots,r\eta_k,\btk),
\]
$x^L_j \in \R^n$ for every $j=1,\dots,k$, and
\[
 \bxk^P = ( x^P_1, \dots,x_k^P)  \in P ( \boldsymbol{\eta} , \btk),
 \] 
 where $x^P_j \in \R^n$ for $j=1,\dots,k$. In consequence, we have that
\[
\Gamma ( r,\boldsymbol{\eta}) = L( r\boldsymbol{\eta},\btk) \oplus P ( \boldsymbol{\eta} , \btk).
\]
Now, by identity (\ref{54206937632874628374}), the Radon transform of the moment function $M^k$ at $(r, \boldsymbol{\eta})  \in \R \times \mathbb{S}^{nk-1}$ takes the form
 \begin{eqnarray}
 \label{532684892}
R [M^k] (r,\boldsymbol{\eta}) & = & \int\limits_{\Gamma(r,\boldsymbol{\eta})} M^k (\bxk) d\nu (\bxk)\nonumber \\
&=&  \int\limits_{P ( \boldsymbol{\eta} , \btk)} \int\limits_{L ( r\boldsymbol{\eta}  , \btk)}\mathbb{E} \Big( \prod_{j=1}^k V (x_j^L+ x_j^P) \Big) dl (\bxk^L)dP(\bxk^P)\nonumber  \\
&=&  \int\limits_{P ( \boldsymbol{\eta} , \btk)} \int\limits_{L ( r\boldsymbol{\eta} + \bxk^P , \btk)}\mathbb{E} \Big( \prod_{j=1}^kV (z_j)\Big) dl (\boldsymbol{z})dP(\bxk^P) \nonumber \\
&=& 2^k \int\limits_{P ( \boldsymbol{\eta} , \btk)} D^k(r\boldsymbol\eta + \bxk^P, \btk (\boldsymbol\eta) )   dP ( \bxk^P ), 
 \end{eqnarray}
where  $dP$, $dl$, and $d\nu$ are the canonical volume forms, induced by inclusions into $ \Timk\R^n$.
Let $\iota_{\btk(\boldsymbol{\eta} ) } :  T_{\btk(\boldsymbol\eta) }  \Timk \mathbb{S}^{n-1} \hookrightarrow T  \Timk \mathbb{S}^{n-1} $ be the trivial inclusion. 
Considering the exterior data as a function $ D^k  \in  C^\infty_c \big( T \Timk  \mathbb{S}^{n-1} \big)$ reduces (\ref{532684892})  to 
\begin{equation}\label{nice_form}
R [M^k] (r,\boldsymbol{\eta}) = \big\langle   \delta \big( r -\boldsymbol\eta \cdot \iota_{\btk (\boldsymbol\eta)} ( \boldsymbol{v} ) \big) ,  D^k \circ \iota_{\btk(\boldsymbol{\eta}) }  (\boldsymbol{v} )   \big\rangle  = \big\langle ( \iota_{ \btk (\boldsymbol\eta)} )_*   \delta \big( r -\boldsymbol\eta \cdot \iota_{\btk (\boldsymbol\eta)}  \big) ,  D^k   \big\rangle  ,  
\end{equation}
where we denote $ \boldsymbol{v}\in  T_{\btk(\boldsymbol\eta) }  \Timk  \mathbb{S}^{n-1} $ 
and identify $ T_\theta \mathbb{S}^{n-1} = \{ \theta \}^\perp \times \{ \theta \}  \subset \R^n \times \R^n $.
Finally, to obtain $M^k$ from the transformed quantity one applies the Radon inversion formula, 
\[
\Delta^{(nk-1)/2} R^* R = id,
\] 
where $R^*$ is the adjoint of the Radon transform and 
\[
 \Delta^{(nk-1)/2} f (x) := \int_{\R^{nk}} e^{ix \cdot \xi} | \xi |^{nk-1} \widehat{f} (\xi) d\xi .
 \]
 More precisely, by (\ref{nice_form}), the Radon inversion formula implies
 \[
 \Psi^k \{   D^k \} (\bxk)  = M^k (\bxk)
 \]
where
 \begin{equation}\label{ReOp}
 \Psi^k : = \Delta^{(n-1)/2} R^t  A^k ,
\end{equation}
and
\[
A^k : 
C^\infty_c \left( \Timk T \mathbb{S}^{n-1}\right) \rightarrow C^\infty (  \R \times \mathbb{S}^{kn-1} ), \ A^k( r, \boldsymbol\eta   ) := ( \iota_{ \btk (\boldsymbol\eta)} )_*   \delta \big( r -\boldsymbol\eta \cdot \iota_{\btk (\boldsymbol\eta)}  \big).
\]
This concludes the proof.
\end{proof}
We will refer to the map $ \Psi^k$ 
as a reconstruction operator for smooth potentials.

\subsection{Proof of Theorem \ref{thm:main}, The General Setting}\label{mainprooffi}

We shall first study the system for a single random parameter $\omega_0 \in \Omega$.
Fix the potential $V(x)=V(x,\omega_0) \in L^\infty ( \R^n) \cap H^2 (\R^n )$ that is supported in $B(0,R)$, $R >0$ and 
write the solution of (\ref{eq:main_problem}) with a possibly non-smooth potential in the form
\begin{equation}\label{2504778798787878}
u_{sc} (x,t,\tilde\bt) =\sum_{j=1}^N  \frac{1}{2} a_1(x, \tilde\theta_j)  H (t- x\cdot \tilde\theta_j) + u_R (x,t,\tilde\bt),
\end{equation} 
where
\[
 a_1 (x, \tilde\theta_j ) := \int\limits_{-\infty}^0 V (x + s\tilde\theta_j ) ds 
\]
and $u_R (x,t,\tilde\bt)$ is the residual term.
We will need the following lemmas:
\begin{lemma}
Let $V(x) \in   L^\infty (\R^n) \cap H^2(\R^n)$ be supported in $B(0,R)$. It holds that $x \mapsto H(t-x\cdot \theta) \Delta a_1(x,\theta) \in L^2 ( \R^n)$.
\begin{proof}
Consider $\Delta a_1(x,\theta)$ as a linear functional
\[
\Delta a_1(\cdot,\theta) : L^2( \R^n)  \rightarrow \R : \phi(x) \mapsto  \int\limits_{-\infty}^0  \int\limits_{\R^n}  \Delta V(x + s\theta) \phi (x)  dx ds.  \\
\]
It follows by the Cauchy--Schwarz inequality that
\begin{eqnarray*}
\left| \langle H(t-x \cdot \theta ) \Delta a_1(x,\theta) , \phi(x)  \rangle \right|  & = & \left| \int\limits_{-\infty}^0  \int\limits_{\R^n}  \Delta V(x + s\theta) \phi (x) H(t-x\cdot \theta)  dx ds \right|\\
&\leq & 
  \int\limits_{-t-2R}^0 \int\limits_{\R^n}   | \Delta V(x + s\theta)| |\phi (x) | dx ds \\
& \leq &\int\limits_{-t-2R}^0    \| \Delta V (\cdot + s\theta) \|_{L^2( \R^n)} \|\phi \|_{L^2( \R^n)}  ds  \\
 &=& (2R + t)  \| \Delta V\|_{L^2( \R^n)} \|\phi \|_{L^2( \R^n)} ,
\end{eqnarray*} 
that is, the functional $H(t-x\cdot \theta) \Delta a_1(x,\theta) : L^2(\R^n) \to \R$ is bounded.  The claim follows by duality.

\end{proof}
\end{lemma}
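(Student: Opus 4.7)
The plan is to prove $H(t-x\cdot\theta)\Delta a_1(\cdot,\theta)$ lies in $L^2(\R^n)$ by showing that it defines a bounded linear functional on $L^2(\R^n)$ and then invoking the Riesz representation theorem (equivalently, $L^2$ self-duality). The proof proceeds entirely by estimating the pairing $\langle H(t-x\cdot\theta)\Delta a_1(x,\theta),\phi(x)\rangle$ for $\phi\in C_c^\infty(\R^n)$ (which is dense in $L^2$).

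First, since $V\in H^2(\R^n)$ with compact support in $B(0,R)$, the distributional Laplacian $\Delta V$ is actually an $L^2(\R^n)$ function with $\mathrm{supp}\,\Delta V\subset\overline{B(0,R)}$. For fixed $x$ and $\theta\in\S^{n-1}$, the ray $s\mapsto x+s\theta$ with $s\le 0$ intersects $B(0,R)$ in a bounded segment, so $s\mapsto \Delta V(x+s\theta)$ is compactly supported in $s$, and therefore
\[
\Delta a_1(x,\theta) = \int_{-\infty}^{0} \Delta V(x+s\theta)\, ds
\]
is well-defined in the distributional sense (a short integration-by-parts argument against a test function verifies that the Laplacian commutes with the line integral).

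Next, I would pair against a test function $\phi\in C_c^\infty(\R^n)$, use Fubini to exchange the $s$-integral and the $x$-integral, and then apply Cauchy--Schwarz in the $x$ variable after exploiting translation invariance $\|\Delta V(\cdot+s\theta)\|_{L^2}=\|\Delta V\|_{L^2}$. The key geometric observation is how to truncate the $s$-range. If both $H(t-x\cdot\theta)\neq 0$ and $\Delta V(x+s\theta)\neq 0$, then $x\cdot\theta\le t$ and $(x+s\theta)\cdot\theta\ge -R$, which forces $s\ge -R-x\cdot\theta\ge -R-t$. Hence the effective $s$-range is contained in the finite interval $[-R-t,0]$ (or the slightly larger $[-2R-t,0]$ used in the excerpt, which is harmless). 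This yields
\[
\bigl|\langle H(t-x\cdot\theta)\Delta a_1(x,\theta),\phi\rangle\bigr|\le C(R,t)\,\|\Delta V\|_{L^2}\|\phi\|_{L^2},
\]
and the claim follows.

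The only mildly delicate points are (i) justifying that $\Delta a_1(\cdot,\theta)$ may be represented as the iterated integral $\int_{-\infty}^0 \Delta V(x+s\theta)\,ds$ rather than merely as a distribution, which uses $V\in H^2$ together with the compact support to make $\Delta V\in L^2$ pointwise a.e.\ and integrable in $s$ over a finite segment, and (ii) the support-tracking argument that reduces the $s$-integration to a finite interval. Both are straightforward once the Heaviside cutoff and the support constraint on $V$ are combined. I do not expect any genuine obstacle here; the lemma is essentially a Cauchy--Schwarz/Fubini estimate exploiting the compact support of $V$.
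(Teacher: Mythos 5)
Your proposal is correct and takes essentially the same route as the paper: both treat $H(t-x\cdot\theta)\Delta a_1(\cdot,\theta)$ as a linear functional on $L^2(\R^n)$, truncate the $s$-integration to a finite interval using the compact support of $V$ together with the Heaviside cutoff, apply Fubini and Cauchy--Schwarz with the translation invariance $\|\Delta V(\cdot+s\theta)\|_{L^2}=\|\Delta V\|_{L^2}$, and conclude by $L^2$ duality. Your sharper interval $[-R-t,0]$ versus the paper's $[-2R-t,0]$ only changes the constant and is immaterial.
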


\begin{lemma}
The residual term $u_R$ in \eqref{2504778798787878} satisfies $u_R (\cdot,t,\tilde\bt) \in H^1_{loc} (\R^{n+1} )$. 
\begin{proof} 
Let $T>R$. 
Substituting the ansatz (\ref{2504778798787878}) into (\ref{eq:main_problem}) yields
\begin{equation}\label{EQU2233}
\begin{split}
(\Box - V(x)) u_R (x,t,\tilde\bt) = \frac{1}{2} \sum_{j=1}^N H (t- x\cdot \tilde\theta_j) ( V(x) + \Delta ) a_1(x, \tilde\theta_j) .
\end{split}
\end{equation}
The idea is to construct a sufficiently regular solution of the equation 
 (\ref{EQU2233})  
by extending a local solution that satisfy initial and boundary conditions of the residual. 
The right side of $(\ref{EQU2233})$ is a sum of $L^2$-functions supported in $B(0,R+t)$ for each $t$. In particular, 
\begin{equation}\label{1069360389089}
 (\Box - V(x)) w_R (x,t,\tilde\bt) \in L^1 \big( [ -R , T ] ; L^2 (  B(0, r  ) ) \big) 
\end{equation}
where $w_R $ refers to a solution of (\ref{EQU2233}) in a smaller set $(x,t) \in B(0, r  ) \times [-R , T ] $, with sufficiently large $r>2R+T$.
As no scattering occurs before the incident wave hits the potential, we consider the initial conditions
\begin{equation}\label{raja111}
\begin{cases}
w_R (x,t,\tilde\bt )=u_R(x,t,\tilde\bt ) = 0, & t = -R,\\
 \partial_t w_R(x,t, \tilde\bt )= \partial_t u_R(x,t, \tilde\bt )  = 0 , &t =-R.\\
\end{cases}
\end{equation}
%
In $ \big(\R^n \setminus \overline{B(0,R)} \big)\times \R$ the scattered wave satisfies the free wave equation $\square u_{sc} = 0$ so due to the initial values $u_{sc} (x,t,\tilde\bt) = 0 = \partial_t u_{sc}  (x,t,\tilde\bt)$, $t<-R$, and unit propagation speed of disturbances the scattered wave is supported within the set  
\[
C_R:= \{ (x,t) \in \R^{n+1} :  x \in B(0, 2R + t) \}
\] 
which contains also the support of $a(x,\theta) H(t-x\cdot \theta)$ for each $\theta \in \mathbb{S}^{n-1}$. Consequently, $\text{supp} (u_R)  \subset C_R$. It is therefore suitable to associate $w_R$ with the zero boundary conditions,
\[
w_R (x,t, \tilde\bt)  = 0 \text{, for } (x,t) \in \partial B(0,r ) \times [-R,T] 
\]
%
%
%
By \cite[Ch. IV \textsection3]{lohwater2013boundary} the initial and boundary conditions above ensure existence and uniqueness of $w_R (x,t,\tilde\bt) \in H^1( B(0, r ) \times [-R,T] )$.
Within the space $\big( B(0,r ) \setminus B(0, T +R  ) \Big) \times [-R ,T]$ the function $w_R $ satisfies the free wave equation, $\square w_R = 0$, so applying the energy estimate together with
 zero initial 
and boundary 
 conditions one extends the domain of $w_R$ into $\R^{n+1}$. 
This can be seen by first applying the estimate to find a zero extension $\overline{w}_R \in H^1 (\R^n \times [-R,T] )$ of $w_R$ with respect to variable $x$ and, second, to extend the domain to the whole timeline $t\in \R$ in the weak sense. 
Finally, uniqueness of $u_{sc}$ implies that the extension equals $u_R$. 
\end{proof}
\end{lemma}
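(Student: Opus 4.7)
My plan is to realise $u_R$ as the unique solution of an inhomogeneous wave equation with an $L^2$-in-space source, obtained by first solving on a bounded spacetime cylinder with zero data and then extending via finite propagation speed. Substituting the ansatz \eqref{2504778798787878} into \eqref{eq:main_problem} and performing the formal computation $\Box[\tfrac{1}{2} a_1(x,\tilde\theta_j)H(t-x\cdot\tilde\theta_j)] = V(x)\delta(t-x\cdot\tilde\theta_j) - \tfrac{1}{2}\Delta a_1(x,\tilde\theta_j)H(t-x\cdot\tilde\theta_j)$ (which relies on the weak identity $\tilde\theta_j\cdot\nabla a_1(\cdot,\tilde\theta_j)=V$ from the definition of $a_1$), all Dirac contributions cancel and one arrives at the inhomogeneous equation \eqref{EQU2233}. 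The preceding lemma ensures $H(t-x\cdot\tilde\theta_j)\Delta a_1(\cdot,\tilde\theta_j)\in L^2(\R^n)$ with a bound polynomial in $t$, while $V\in L^\infty$ times $a_1\in L^2_{loc}$ controls the zeroth-order contribution; together the right-hand side lies in $L^1_{loc}(\R_t;L^2(\R^n_x))$.

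Next, I would fix $T>R$ and choose the radius $r>2R+T$ so that the forward cone $C_R$ keeps a definite distance from $\partial B(0,r)$ throughout the slab $[-R,T]$. On the cylinder $B(0,r)\times[-R,T]$ I would solve the initial--boundary value problem for $(\Box-V)w_R=F$ with the zero Cauchy data \eqref{raja111} at $t=-R$ and homogeneous Dirichlet condition on $\partial B(0,r)\times[-R,T]$. Classical well-posedness for hyperbolic equations with bounded potentials and $L^1_tL^2_x$ source delivers a unique $w_R\in H^1(B(0,r)\times[-R,T])$.

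Finally, I would globalise $w_R$ using finite propagation speed. Because the Cauchy data vanish at $t=-R$ and the source $F$ is tailored by the Heaviside factors so that domain-of-dependence considerations trap $w_R$ inside $C_R$, and because $r>2R+T$ ensures $C_R\cap([-R,T]\times B(0,r))$ is a positive distance from the artificial wall $\partial B(0,r)$, the zero extension of $w_R$ to $\R^n\times[-R,T]$ is still $H^1_{loc}$ and still solves \eqref{EQU2233} weakly. Extending further by zero for $t<-R$ and by solving the wave equation with $V$ for $t>T$ produces a function in $H^1_{loc}(\R^{n+1})$, and combining it with the leading-singularity term gives a bona fide scattered wave; the uniqueness statement for \eqref{eq:main_problem} then identifies the extension with $u_R$. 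The main obstacle I anticipate is making the extension argument rigorous: one must verify that the zero extension of $w_R$ is genuinely a weak solution across $\partial B(0,r)$, and that the resulting globally defined object really satisfies the same equation that characterises $u_R$, so that uniqueness applies.
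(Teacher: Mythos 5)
Your proposal follows essentially the same route as the paper: derive the inhomogeneous equation \eqref{EQU2233} for $u_R$ with an $L^1_t L^2_x$ right-hand side (using the preceding lemma), solve the initial--boundary value problem with zero Cauchy and Dirichlet data on a cylinder $B(0,r)\times[-R,T]$ with $r>2R+T$, extend by zero using finite propagation speed and the support cone $C_R$, and identify the result with $u_R$ by uniqueness of $u_{sc}$. The explicit cancellation of the Dirac terms via $\tilde\theta_j\cdot\nabla a_1=V$ and the concern about the zero extension being a genuine weak solution across $\partial B(0,r)$ are exactly the points the paper addresses, the latter via the energy estimate in the free-wave region $\big(B(0,r)\setminus B(0,T+R)\big)\times[-R,T]$.
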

Due to regularity of $u_R$ the peak 
$[ u_R  ]_{\Sigma_i}$ vanishes.
As in the smooth case, the amplitude of the peak singularity carried by the scattered wave reduces to $ \frac{1}{2} a_1 ( \cdot,\theta_i)$ on distant regions within the boundary $\Sigma_i$:
 \[
\begin{split}
[ u_{sc}]_{\Sigma_i} (x+ s \theta_i , \tilde\bt) 
&= \left[ \sum_{j=1}^N \frac{1}{2}a_1( x,\tilde\theta_j) H (t- x \cdot \tilde\theta_j )  \right]_{\Sigma_i} (x + s \theta_i, \tilde\bt) 
\\
&=  \frac{1}{2}a_1(x+ s \theta_i ,\theta_i), 
\end{split}
\]
Consequently, we have
\[
 \lim_{s\rightarrow \infty} [ u_{sc}  ]_{\Sigma_i} (x + s \theta_i ) =  \frac{1}{2}  \int\limits_{\R} V(x + s \theta_i ) ds .
\]

We shall now apply the previous observations to the probabilistic setting.
As earlier, define the data for $k$ base points by
\begin{equation}\label{2603793639874}
D^k(\bxk, \btk ) 
:=  \mathbb{E} \bigg( \lim_{s\rightarrow \infty} \Timk [u_{sc}]_{\Sigma_j} (x_j + s\theta_j , \tilde\bt) \bigg). 
\end{equation}
For $k=1$, taking convolution with the mollifier $\phi_\epsilon (x) = \epsilon^{-n} \phi (  \epsilon^{-1} x)$, $\phi  \in C_c^\infty ( \R^n ) $, $\int_{\R^n}\phi (x) dx = 1$ yields 
\begin{eqnarray}
[ \phi_\epsilon  * D^1( \cdot, \theta) ](x)&= & \int\limits_{\R^n} \phi_\epsilon (z)  \mathbb{E}\left( \frac{1}{2}  \int\limits_\R V(x-z + s\theta) ds \right) dz\nonumber \\
&=& \mathbb{E} \left(\frac{1}{2}  \int\limits_\R V_\epsilon(x+s\theta) ds\right) \nonumber \\
&=&  D^1_{\epsilon} (x,\theta), \label{kase1}
\end{eqnarray}
where $D^1_\epsilon (x,\theta)$  is the exterior data associated to the mollified smooth potential $V_\epsilon (x) := (\phi_\epsilon * V ) (x)$.
Similarly, one derives
\[
[  \phi_\epsilon^k* D^k (  \cdot ,\btk ) ] (\bxk) = D^k_\epsilon (\bxk ,\btk ) ,
\]
where $\phi_\epsilon^k (z_1,\dots,z_k) := \phi_\epsilon (z_1)\cdots \phi_\epsilon(z_k)$, and  $D_\epsilon ( \bxk ,\btk) $ is the data (\ref{54206937632874628374}) at $\bxk \in \Timk \R^n$ in directions $\btk \in \Timk  \mathbb{S}^{n-1}$ for the smooth potential $V_\epsilon$. 
The mollified moment $M^k_\epsilon$ approximates $M^k$ in the sense of generalized functions:
\begin{equation}\label{sdfjkhsd234267384}
\begin{split}
\lim_{\epsilon \rightarrow 0} M^k_\epsilon (\bxk) &= \lim_{\epsilon \rightarrow 0} \mathbb{E} \int_{\Timk\R^n} \phi_\epsilon^k (x_1 - z_1,\dots,x_k-z_k) V (z_1) \cdots V(z_k)  dz
\\
 &= \lim_{\epsilon \rightarrow 0} \int_{\Timk\R^n} \phi_\epsilon^k (x_1 - z_1,\dots,x_k-z_k) \mathbb{E} (V (z_1) \cdots V(z_k))  dz \\
 &=\lim_{\epsilon \rightarrow 0} \phi^k_\epsilon * M^k(\bxk) \\
  &= \delta_0 * M^k(\bxk) \\
 &= M^k (\bxk) \in \mathcal{D}' \left( \Timk\R^n \right),
\end{split}
\end{equation}
see e.g. \cite[Ch 5]{friedlander1998introduction}.  
The limit in (\ref{sdfjkhsd234267384}) is to be understood by means of sequential convergence in the topology of distributions. 

In summary, we obtain a reconstruction strategy which consists of the following three steps:
\begin{enumerate}
\item[(Step 1)] \textbf{Regularisation of the data: }
The operator 
\[
\Phi^k   : C^\infty \bigg( \Timk \mathbb{S}^{n-1}  ;   \mathcal{D}'  \Big( \Timk \R^n \Big) \bigg) \rightarrow C_c^\infty  \Big( (0,\infty) \times T \Timk \mathbb{S}^{n-1}  \Big), 
\]
given by
\[ 
 \Phi^k( v  ) (\epsilon,  \boldsymbol{y}) := [ \phi_\epsilon * v ( \cdot, \bt) ] ( \boldsymbol{y}), \text{ for } \epsilon \in (0,\infty), \  \boldsymbol{y} \in T_{\bt} \Timk \mathbb{S}^{n-1},
\]
transforms the exterior data into a parametrised family of data which      
corresponds to the regularised potentials:
\[
 \Phi^k ( D^k )   (\epsilon, \boldsymbol{y}) =  D_\epsilon^k( \boldsymbol{y}, \btk ) , \text{ for }  \boldsymbol{y} \in T_{\bt} \Timk \mathbb{S}^{n-1}.
 \] 
 Above, the regularised data is identified with a smooth function on $T \Timk \mathbb{S}^{n-1}$ according to the invariance (\ref{shift}). 
\item[(Step 2)] \textbf{Reconstruction of moments from the regularised data: }
Let $\Psi^k $ be the reconstruction operator for smooth potentials, given by (\ref{ReOp}). 
The associated operator
\[
\tilde\Psi^k:  C_c^\infty  \Big((0,\infty) \times  T \Timk \mathbb{S}^{n-1} \Big)  \rightarrow C^\infty_c  \Big( (0,\infty) \times \Timk \R^n  \Big) , 
\] 
defined by
\[
\tilde\Psi^k f ( \epsilon, \bxk ) := [\Psi^k f  ( \epsilon , \cdot ) ] (\bxk), \ \bxk\in  \Timk \R^n, \ \epsilon\in (0,\infty)
\] 
transforms the regularised data into the corresponding family of moment maps, i.e.,
\[
 \tilde\Psi^k \Phi^k D^k ( \epsilon, \bxk ) = M^k_\epsilon (\bxk)
\]
\item[(Step 3)] \textbf{High-resolution limit: }
After the first two steps, the moment map $M^k \in \mathcal{E}' \big(\Timk \R^n\big)$ is obtained by taking the limit $\epsilon\longrightarrow 0$ within the space of distributions, that is,
\[
\lim_{\epsilon \rightarrow 0}  \Psi^k \Phi^k D^k ( \epsilon, \bxk ) = M^k (\bxk),
\]
\end{enumerate}

\subsection{Proof of Theorem \ref{mainmaintheorem}}
Let $\mathbb{R}^\mathbb{N}$ stand for the set of infinite sequences $(x_j)_{j=1}^\infty$, $x_j \in \R$, $j\in \mathbb{N}$ endowed with the smallest topology for which the coordinate  projections 
are continuous. We recall that $\R^\mathbb{N}$ is a Polish space. Also, let $\ell^2$ stand for the space of all sequences bounded in the $2$-norm.
Let $\iota : \ell^2 \hookrightarrow \R^\mathbb{N}$ stand for the trivial inclusion and define
\[
\iota^{-1} \mathcal{B} (\R^\mathbb{N})  
:= \sigma \{  \ell^2 \cap B  : \ B \in \mathcal{B} (\R^\mathbb{N})  \}.
\]
%
Before continuing to the proof we record the following lemma:
\begin{lemma}\label{u9_lemma}
 The Borel algebra of $\ell^2$ is generated by $\iota$, that is,  $\mathcal{B} (\ell^2) 
= \iota^{-1}\mathcal{B} (\R^\mathbb{N})$.
\begin{proof}
Let $\mathcal{F}^j = (\pi^{\{j\}})^{-1} \mathcal{B} (\R)$ be the $\sigma$-algebra generated by the coordinate projection $\pi^{\{j\}} : \R^\mathbb{N} \rightarrow \R$, for each $j \in \mathbb{N}$.
Since the Borel algebra of $\R$ is generated by open sets, preimages of the form $(\pi^{\{j\}})^{-1} U$, where $U \subset \R$ is open, generate $\mathcal{F}^j$. 
As the union $ \bigcup_{j\in\mathbb{N} }\mathcal{F}_j$ generates $\mathcal{B} (\R^\mathbb{N})$, the algebra $\iota^{-1} \mathcal{B} (\R^\mathbb{N})$ is generated by the pre-images, $ \iota^{-1} (\pi^{\{j\}} )^{-1} U$, $j \in \mathbb{N}$, 
which are open in $\ell^2$ by continuity of the coordinate projections $(\pi^{\{j\}} \circ \iota)$, $j \in \mathbb{N}$. 
In particular, $\iota^{-1} \mathcal{B} (\R^\mathbb{N}) \subset \mathcal{B}(\ell^2)$.
%
On the other hand, each closed ball $\overline{B}_r$ of $\ell^2$ with radius $r>0$, centered at $f \in \ell^2$, is the limit set
\[
\overline{B}_r = \bigcap_{m=1}^\infty  \Big\{ g \in \R^\mathbb{N} :  \sum_{j=1}^m   | f_j - \pi^{\{j\}} g  |^2  \in [0, r^2] \Big\}, \ f_j:= \pi^{\{j\}} \iota f
\]
 which is measurable in $\R^\mathbb{N}$ as a countable intersection of measurable sets. 
Consequently, as every open set of a separable metric space is a countable union of closed balls, the Borel algebra of $\R^\mathbb{N}$ contains the topology of $\ell^2$ which implies $\mathcal{B} (\ell^2) \subset \iota^{-1}\mathcal{B} (\R^\mathbb{N})$.
\end{proof}
\end{lemma}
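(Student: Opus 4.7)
The proof naturally splits into the two inclusions, and the standard strategy is to exploit both the explicit product structure of $\R^\mathbb{N}$ and the separability of $\ell^2$.

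For the inclusion $\iota^{-1}\mathcal{B}(\R^\mathbb{N}) \subset \mathcal{B}(\ell^2)$, I would begin from the fact that the product topology on $\R^\mathbb{N}$ is generated by the cylinder sets $(\pi^{\{j\}})^{-1}(U)$ with $U\subset\R$ open; consequently $\mathcal{B}(\R^\mathbb{N})$ is the $\sigma$-algebra generated by the family $\bigcup_{j\in\mathbb{N}}(\pi^{\{j\}})^{-1}\mathcal{B}(\R)$. Pulling these back through $\iota$, it suffices to check that each $\iota^{-1}(\pi^{\{j\}})^{-1}(U)$ is Borel in $\ell^2$. But $\pi^{\{j\}}\circ\iota : \ell^2 \to \R$ is the $j$-th coordinate functional, which is continuous for the $\ell^2$ norm topology (it is bounded by $1$), so these preimages are in fact open.

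For the reverse inclusion $\mathcal{B}(\ell^2) \subset \iota^{-1}\mathcal{B}(\R^\mathbb{N})$, I would use separability of $\ell^2$: every open set is a countable union of closed balls with rational radii centered at points of a countable dense subset. It is therefore enough to show each closed ball $\overline{B}_r(f)=\{g\in\ell^2:\|g-f\|_2\leq r\}$ belongs to $\iota^{-1}\mathcal{B}(\R^\mathbb{N})$. The key observation is the identity
\[
\overline{B}_r(f) \;=\; \iota^{-1}\!\left(\bigcap_{m=1}^\infty \Big\{ g\in\R^\mathbb{N} : \sum_{j=1}^m |f_j - \pi^{\{j\}}g|^2 \leq r^2 \Big\}\right),
\]
since $\|g-f\|_2^2$ is the monotone limit of its partial sums whenever $g\in\ell^2$. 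Each set on the right is a cylinder depending on only finitely many coordinates, hence measurable in $\R^\mathbb{N}$ as the preimage of a closed set in $\R^m$ under a continuous (polynomial) map of finitely many coordinates, and the countable intersection is therefore also in $\mathcal{B}(\R^\mathbb{N})$.

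The only subtlety to be careful about is the direction of the second inclusion: the intersection on the right-hand side, evaluated inside $\R^\mathbb{N}$, contains sequences that may fail to be in $\ell^2$, but this is harmless since the $\iota^{-1}$ operation discards exactly those. This is essentially the main obstacle to articulate correctly; otherwise the argument is a straightforward application of the fact that product Borel structure and norm Borel structure coincide on separable subspaces whose norm is a countable supremum of continuous functions of finitely many coordinates.
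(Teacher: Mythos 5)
Your proof is correct and follows essentially the same route as the paper's: the first inclusion via continuity of the coordinate functionals on $\ell^2$ applied to the generating cylinder sets, and the second via separability plus the representation of a closed ball as a countable intersection of finite-dimensional cylinder conditions. Your remark that $\iota^{-1}$ handles sequences outside $\ell^2$ is a slightly more careful phrasing of the same identity the paper uses (in fact the intersection already forces $g-f\in\ell^2$, hence $g\in\ell^2$), so nothing further is needed.
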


We shall now continue to the proof. 
Let the random field $V \in H^2( \mathbb{R}^n) \cap L^\infty (\R^n)$ satisfy
\begin{equation}\label{asdkh_100}
\mathbb{E}e^{a\|V\|_{H^2(\R^n)} } < \infty.
\end{equation}
Fix an orthonormal basis $\varphi_j$, $j\in \mathbb{N}$ of $H^2 (\mathbb{R}^n)$ and 
let $\mu^J (V_{j_1}, \dots, V_{j_k})$ be the distribution 
\[
\mu^J (V_{j_1}, \dots, V_{j_k}) := (V_{j_1}, \dots, V_{j_k})_* \mathbb{P} :   \mathcal{B} (\mathbb{R}^k) \rightarrow [0,1] , 
\]
with
\[
   (V_{j_1}, \dots, V_{j_k})_* \mathbb{P}(B) := \mathbb{P} (  (V_{j_1}, \dots, V_{j_k}) \in B ),
\]
generated by the finite collection of random variables $V_l := \langle V, \varphi_l \rangle$, $l \in J  = \{ j_1, \dots, j_k\} \subset \mathbb{N}$.
By definition, the moments satisfy
\[
\langle M^k , \varphi_{j_1} \otimes \cdots \otimes \varphi_{j_k} \rangle = \mathbb{E}(  \langle V , \varphi_{j_1} \rangle \cdots \langle V , \varphi_{j_k} \rangle  )= \mathbb{E}(  V_{j_1} \cdots V_{j_k}  )
\]
for every $j_1, \dots,j_k \in \mathbb{N}$. 
These quantities are determined from the data by Theorem \ref{thm:main}.  
Since,
\[
e^{a|(V_{j_1},\dots,V_{j_k})| } \leq e^{a \| (V_j)_{j=1}^\infty \|_{\ell^2} } = e^{a \| V \|_{H^2(\mathbb{R}^n)} },
\]
the condition (\ref{asdkh_100})  yields
$
 \mathbb{E } e^{a | (V_1,\dots,V_k)| } 
< \infty,
$
 implying that the moments, and hence the data uniquely describes each measure $\mu^J (V_1, \dots, V_k)$ with finite $J \subset \mathbb{N}$, as shown in \cite{DVURECENSKIJ200255}.
 
 Suppose that $W \in H^2 (\mathbb{R}^n)$ is a random field that satisfies  (\ref{asdkh_100}) and yields the same data as $V$. 
In particular, 
\[
\mu^J (V_{j_1}, \dots, V_{j_k} ) = \mu^J (W_{j_1}, \dots, W_{j_k} ),
\]
 for every finite $J = \{ j_1, \dots, j_k\}$. 
It is a consequence of the Kolmogorov extension theorem \cite[Thm. 6.16]{kallenberg2006foundations} that there is an unique probability distribution 
\[
\mu^\mathbb{N}  :
\mathcal{B}(\R^\mathbb{N} ) \rightarrow [0,1] 
\]
in $\R^{\mathbb{N}}$, 
satisfying $  \mu^J(V_{j_1}, \dots, V_{j_k} )  =\pi^{J}_* \mu^\mathbb{N}$ for any finite $J \subset \mathbb{N}$ where $\pi^J : \R^\mathbb{N} \rightarrow \R^{|J|}$ is the natural projection. 

Finally, we can show that the probability distributions $V_* \mathbb{P}$ and $W_*\mathbb{P}$ equal as measures on Borel sets of $\ell^2$.  
Let $I : H^2 (\R^n) \rightarrow \ell^2$ be the isometry $I  := (\langle \cdot, \varphi_j \rangle)_{j=1}^\infty$. 
It enough to prove the claim for elements $I(V)$ and $I(W)$. 
By uniqueness of $\mu^\mathbb{N}$,
\[
\iota_*  I(V)_* \mathbb{P} = \mu^\mathbb{N} = \iota_*  I(W)_* \mathbb{P} ,
\]
that is,
\[
I(V)_* \mathbb{P}( \iota^{-1} A )= I(W)_* \mathbb{P} ( \iota^{-1} A )
\]
for every $A \in \mathcal{B} (\R^\mathbb{N})$. 
Thus we obtain  
$
 I ( V)_* \mathbb{P} = I ( W)_* \mathbb{P}
$ 
 by Lemma \ref{u9_lemma}. This completes the proof of Theorem \ref{mainmaintheorem}.
 

\section{Appendix: Exterior Data on $\Timk T\mathbb{S}^{n-1}$}

We shall show in detail how the representation of the exterior data $D^k(\bxk,\btk)$ on the tangent bundle $T \prod_{j=1}^k \mathbb{S}^{n-1}$ is constructed. Below, $\{ \theta \}^\perp := \{ x\in \R^n : x \cdot \theta = 0 \}$ for $\theta \in \S^{n-1}$. 
\begin{lemma}
The vector bundles $T \mathbb{S}^{n-1}$ and $  \bigcup_{\theta \in \mathbb{S}^{n-1} }   \{ \theta \}^\perp \times  \{ \theta\}  \rightarrow  \mathbb{S}^{n-1}$ with the trivial projection $\text{pr}_2 ( v, \theta) := \theta$  are isomorphic.  The isomorphism is obtained from the tangent of the inclusion $\mathbb{S}^{n-1} \hookrightarrow \R^{n}$ by restricting the codomain. 
\begin{proof}
By expressing each tangent vector $v \in T_\theta \mathbb{S}^{n-1}$ as an equivalence class of curves $\gamma : (-\epsilon,\epsilon) \rightarrow \mathbb{S}^{n-1}$ with $\gamma(0) = \theta$, $\dot\gamma (0) = v$, the tangent map $T\iota : T \mathbb{S}^{n-1} \hookrightarrow T \R^n = \R^n \times \R^n$ associated to the inclusion $\iota : \mathbb{S}^{n-1} \hookrightarrow \R^n$ takes the form $T\iota [\gamma] = [\iota \circ \gamma]$. As $( \iota \circ \gamma )' (0) $ is perpendicular to the base point $\iota \circ \gamma(0)$, we obtain $T\iota (T_\theta \mathbb{S}^{n-1}  ) \subset \{ \theta \}^{\perp} \times  \{ \theta \} $  which, together with injectivity and the rank-nullity theorem, implies that $T \iota$ defines a bundle isomorphism 
\[
\xymatrix{
T \mathbb{S}^{n-1} \ar[dr]\ar[r]^<<<{T \iota}  &  \bigcup_{\theta \in \mathbb{S}^{n-1} }  \{ \theta \}^\perp \times  \{ \theta\}  \ar[d]^{\text{pr}_2} \\
& \mathbb{S}^{n-1} 
}
\]
Note the untypical order of the base point $\theta$ and the fiber space $\{ \theta \}^\perp$ in $ \{ \theta \}^\perp \times  \{ \theta\} $. 
\end{proof}
\end{lemma}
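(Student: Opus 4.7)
The plan is to define an explicit bundle map from $T\mathbb{S}^{n-1}$ into $\bigcup_{\theta \in \mathbb{S}^{n-1}} \{\theta\}^\perp \times \{\theta\}$ by restricting the codomain of the differential $T\iota$ of the inclusion $\iota : \mathbb{S}^{n-1} \hookrightarrow \R^n$, and then verify it is a bundle isomorphism by checking three things: (i) the image actually lies in the claimed subset, (ii) the map is a linear isomorphism on each fiber, and (iii) it intertwines the two bundle projections. Smoothness is automatic since the tangent map of a smooth map is smooth, and the inverse of a smooth fiberwise linear isomorphism between smooth vector bundles is itself smooth (local trivializations reduce this to inverting a smoothly varying linear map of constant rank).

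For step (i), I would realize a tangent vector $v \in T_\theta \mathbb{S}^{n-1}$ as an equivalence class $[\gamma]$ of a smooth curve $\gamma : (-\epsilon, \epsilon) \to \mathbb{S}^{n-1}$ with $\gamma(0) = \theta$ and $\dot\gamma(0) = v$. Under the canonical identification $T\R^n \cong \R^n \times \R^n$, with the paper's convention of writing pairs as (fiber vector, base point), the tangent map takes the form $T\iota([\gamma]) = ((\iota \circ \gamma)'(0), \theta)$. Differentiating the constraint $|\gamma(s)|^2 = 1$ at $s = 0$ gives $(\iota \circ \gamma)'(0) \cdot \theta = 0$, so the image lies in $\{\theta\}^\perp \times \{\theta\}$ as required.

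For step (ii), both $T_\theta \mathbb{S}^{n-1}$ and $\{\theta\}^\perp$ are real vector spaces of dimension $n-1$. The differential $(d\iota)_\theta$ is injective because $\iota$ is an immersion, so by the rank--nullity theorem it is a bijection onto $\{\theta\}^\perp$. For step (iii), the bundle projection of $T\mathbb{S}^{n-1}$ sends $v \in T_\theta \mathbb{S}^{n-1}$ to $\theta$, while $\mathrm{pr}_2$ sends $((\iota \circ \gamma)'(0), \theta)$ to $\theta$; the diagram therefore commutes tautologically. I do not anticipate any genuine obstacle here. The only subtle point is being careful about the ordering $(\text{fiber}, \text{base})$ used in writing $\{\theta\}^\perp \times \{\theta\}$, which is slightly non-standard and needs to be kept consistent throughout the identification.
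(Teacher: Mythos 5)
Your proposal follows essentially the same route as the paper: realizing tangent vectors as equivalence classes of curves, deriving the orthogonality $(\iota\circ\gamma)'(0)\cdot\theta=0$ from the sphere constraint, and concluding fiberwise bijectivity from injectivity plus rank--nullity. The extra remarks on smoothness of the inverse and commutativity of the projection diagram are correct and only make explicit what the paper leaves implicit.
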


The exterior data can be rewritten as a generalised function $ \btk \mapsto  D^k(\btk)  \in \mathcal{D}' ( T_{\btk} \prod_{j=1}^k \mathbb{S}^{n-1} )$ of the bundle $T \prod_{j=1}^k \mathbb{S}^{n-1} $ by setting
\[
\langle D^k (\btk)  ,  \varphi  \rangle =  \Big\langle D^k   ,   \varphi \circ   (T\iota)^{-1} |_{\Timk \{ \theta_j \}^\perp \times \{ \theta_j \} }  \Big\rangle,  \ \varphi \in C_c^\infty \left(T_{\btk} \prod_{j=1}^k  \mathbb{S}^{n-1} \right)
\]
where---with abuse of notation---we denote by $T\iota$ also the associated isomorphism 
\[
\prod_{j=1}^k T \mathbb{S}^{n-1} \rightarrow \prod_{j=1}^k  \bigcup_{\theta \in \mathbb{S}^{n-1} }  \{ \theta \}^\perp \times  \{ \theta\},
\]
 generated by the tangent map above. It is a consequence of the invariance (\ref{shift}) that no information is lost in the identification. That is, the lift of $D^k ( \cdot , \btk)$ to the space $T_{\btk} \Timk \S^{n-1} = \Timk \{ \theta_j \}^\perp \times \{ \theta_j \}$ is well defined and unique for each $\btk \in \Timk \S^{n-1}$.


\bibliographystyle{acm}
\bibliography{bibs_scattering,citations,one_more,mattis_extras}

%
%
%
%
%
%
%
%
%

\end{document}